\theoremstyle{plain}
\newtheorem{thm}{Theorem}[section]
\newtheorem{lem}[thm]{Lemma}
\theoremstyle{definition}
\numberwithin{equation}{section}
\begin{document}
	\title[Third Hankel determinant for analytic functions]{The sharp bound of the third order Hankel determinant for  inverse of Ozaki close-to-convex functions}
	
	\author[B. Rath, K. S. Kumar, D. V. Krishna]{Biswajit Rath$^{1}$, K. Sanjay Kumar $^{2},$ D. Vamshee Krishna$^{3}$}
	
	\address{$^{1.2.3.4}$Department of Mathematics,
		Gitam Institute of Science, GITAM University,
		Visakhapatnam- 530 045, A.P., India}
	
	\email{brath@gitam.edu$^{1*}$,skarri9@gitam.in$^{2}$,vamsheekrishna1972@gmail.com$^{3}$}
	
	\begin{abstract}
		Let $f$ be analytic in the unit disk $\mathbb{D}= \{z \in \mathbb{C}~:~ |z| < 1\}$, and $\mathcal{S}$ be the subclass of normalized univalent functions given by $f(z)=\sum_{n=1}^{\infty}a_{n}z^{n},~a_{1}:=1$
		for $z \in\mathbb{D}$. We present the sharp bounds of the third order Hankel determinant for inverse functions when it belongs to of the class of Ozaki close-to-convex.
	\end{abstract}
	
	\keywords{Analytic function, Upper bound, Hankel determinant, Carath\'{e}odory function, Ozaki close-to-convex functions}
	
	\subjclass{30C45, 30C50}
	
	\date{}
	
	\maketitle
	\section{Introduction}
	Let $\mathcal{A}$ be the family of all analytic normalized mappings $f$ of the form
	\begin{equation}
		f(z)=\sum_{t=1}^{+\infty}a_{t}z^{t},~a_{1}:=1,
	\end{equation}
	in the open unit disc $\mathbb{D}=\{z \in \mathbb{C}:|z|<1\}$ and $\mathcal{S}$ is the subfamily of $\mathcal{A},$ possessing univalent (schlicht) mappings. For $f\in\mathcal{S},$ has an inverse $f^{-1}$ given by
	\begin{equation}
		f^{-1}(w)=w+\sum_{n=2}^{\infty}t_nw^n,~~|w|<r_o(f);\left(r_o(f)\geq\frac{1}{4}\right).
	\end{equation}
	A typical problem in geometric function theory is to study a functional made up of combination of the coefficients of the original functions. For the positive integers $r,~n,$ Pommerenke \cite{Pommerenke1966} characterized the $r^{th}$- Hankel determinant of $n^{th}$-order for $f$ given in (1.1), defined as follows:
	\begin{equation}
		H_{r,n}(f)=  \begin{array}{|cccc|}
			a_{n} & a_{n+1} & \cdots & a_{n+r-1} \\
			a_{n+1} & a_{n+2} & \cdots & a_{n+r} \\
			\vdots& \vdots & \ddots & \vdots \\
			a_{n+r-1} & a_{n+r} & \cdots & a_{n+2r-2}
		\end{array}.
	\end{equation}
	This determinant is investigated by many authors for different combinations of $r$ and $n$ in (1.3) which yields various types of Hankel determinants such as  $r = 2$ and $n =1$ gives the famous Fekete-Szeg$\ddot{o}$ functional mathematically denoted by  $|H_{2,1}(f)|:=|a_3-a_2^2|.$ The estimation of the sharp bounds to the functional $|H_{2,2}(f)|:=|a_{2}a_{4}-a_{3}^{2}|$ is obtained for  $r = n=2$ in (1.3) is called as second order Hankel determinant for bounded turning, starlike and convex, symbolized as $\Re,$  $S^{\ast}$ and $\mathcal{C}$ respectively fulfilling the conditions Re$\left\{f'(z)\right\}>0,$ Re$\left\{\frac{zf'(z)}{f(z)}\right\}>0$ and Re$\left\{1+\frac{zf''(z)}{f'(z)}\right\}>0$ in the unit disc $\mathbb{D},$  were obtained by many authors which can be found in the literature (see e.g., \cite{Janteng2007,Janteng2006}) .
	
	The problem of finding the sharp estimates of the third Hankel determinant obtained for $r = 3$ and $n = 1$ in (1.3) gives
	\begin{equation}
		H_{3,1}(f):= \begin{array}{|ccc|}
			a_{1}=1 & a_{2} & a_{3}\\
			a_{2} & a_{3} & a_{4}\\
			a_{3} & a_{4} & a_{5}
		\end{array}= 2a_2a_3a_4-a_3^3-a_4^2+a_3a_5-a_2^2a_5
	\end{equation}
	which is technically much tough than $r=n=2$.\\
	
	In recent years many authors are working on obtaining upper bounds ~~~(see \cite{Babalola2010,Kwon2019-1,Sim2021,Srivastava2021,Zaprawa2021}) and a few papers were devoted to the estimation of sharp upper bound to $H_{3,1}(f)$ for certain subclasses of analytic functions (see \cite{Banga2020,Kowalczyk2018-1,Kowalczyk2018-2,Kwon2019-2,Rath2022,Ullah2021}).
	
	A function $f\in\mathcal{A}$ belongs to $\mathcal{K}$, the class of close-to-convex functions if,
	and only if, there exists $g\in S^{\ast}$, such that Re$\{e^{i\tau}(zf'(z)/g(z))\}>0$ for $z\in\mathbb{D}$, and $\tau\in(-\pi/2,\pi/2)$. The class $\mathcal{K}$ was first formally introduced by Kaplan in
	1952 \cite{Kaplan}, who showed that $\mathcal{K}\subset \mathcal{S}$, so that $\mathcal{C}\subset \mathcal{S}^*\subset\mathcal{K}\subset \mathcal{S}$.
	In 1941 Ozaki \cite{ozaki1941} introduced a class of function known as Ozaki close to convex function is denoted by $\mathcal{F}$ and defined as
	\begin{equation}
		\mathcal{F}=\left\{f\in\mathcal{A}~\bigg|~\text{Re}\left(1+\frac{zf''(z)}{f'(z)}\right)>-\frac{1}{2}\right\}.
	\end{equation}
	
	We also note that $\mathcal{F}\subset\mathcal{K}$ follows from the original definition of Kaplan \cite{Kaplan}, and that Umezawa \cite{Umezawa} subsequently proved that functions in
	$\mathcal{F}$ are not necessarily starlike, but are convex in one direction.
	Recently, Bansal et al. \cite{bansal2015} obtained an upper bound to $H_{3,1}(f)$ for the class $\mathcal{F}$ as $\big(180+69\sqrt{15}\big)/32\sqrt{15}$ and  Maharana et al. \cite{Maharana2020} obtained an upper bound to $H_{3,1}(f^{-1})$ as $\big(45693+3648\sqrt{78}\big)/9216$ when $f\in \mathcal{F}$. Very recently,B.Kowalczyk et al\cite{Kowalczyk2023} obtainted sharp bound to $H_{3,1}(f)$ as $1/16$ when $f\in \mathcal{F}$.  
	
	Motivated by the results obtained by the authors mentioned above, in this paper we are making an attempt to estimate sharp bound for the third Hankel determinant namely $|H_{3,1}(f^{-1})|$, when $f$ belongs to the class of $\mathcal{F}$ and it is  invariance. 
	
	Let $\mathcal{P}$
	be a class of all functions $p$ having a positive real part in  $\mathbb{D}$:
	\begin{equation}
		p(z) = 1+\sum_{t=1}^{\infty}c_{t}z^{t}.
	\end{equation}
	Every such a function is called the Carath\'{e}odory function. In view of (1.4) and (1.5), the coefficients of the functions in $\mathcal{F}$ can be expressed in terms of coefficients of functions in $\mathcal{P}$. Hence, to estimate the upper bound of $|H_{3,1}(f^{-1})|,$ we build our computation on the well known formulas on coefficients $c_2$ (see, \cite[p. 166]{Pommerenke1975}, ), $c_3$ (see \cite{LibZlo1982,Libera1983}) and $c_4$ can be found in \cite{Rath2022A}.
	
	The foundation for proof of our main result are the following lemmas and we adopt some ideas
	from Libera and Zlotkiewicz \cite{Libera1983}.
	\begin{lem}
		If $p\in\mathcal{P},$ is of the form (1.5) with $c_1\geq0,$ such that $c_1\in[0,2]$ then
		\begin{equation*}
			\begin{split}
				&2c_2=c_1^2+\nu\mu,
				\\
				&4c_3=c_1^3+2c_1\nu\mu-c_1\nu\mu^2+2\nu\left(1-|\mu|^2\right)\rho,
				\\
				\text{and}\\
				&\begin{split}
					8c_4=~&c_1^4+3c_1^2\nu\mu+\left(4-3c_1^2\right)\nu\mu^2+c_1^2\nu\mu^3+4\nu\left(1-|\mu|^2\right)\left(1-|\rho|^2\right)\psi\\&+4\nu\left(1-|\mu|^2\right)\left(c_1\rho-c\mu\rho-\bar{\mu}\rho^2\right),
				\end{split}
			\end{split}
		\end{equation*}
		where $\nu:=4-c_1^2,$  for some $\mu$, $\rho$ and $\psi$ such that $|\mu|\leq 1$, $|\rho|\leq 1$ and $|\psi|\leq1$.
	\end{lem}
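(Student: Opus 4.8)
\emph{Proof idea.} The plan is to recover these identities from the classical Carath\'{e}odory--Schur parametrization, peeling off three disc-valued parameters by iterating the Schwarz lemma, in the spirit of Libera and Zlotkiewicz. By hypothesis $c_1\in[0,2]$, so $\gamma_1:=c_1/2\in[0,1]$ is \emph{real}; this reality is precisely what keeps conjugates of $c_1$ from appearing below. First I would pass from $p\in\mathcal{P}$ to the Cayley transform $\omega(z)=(p(z)-1)/(p(z)+1)$, which maps $\mathbb{D}$ into $\mathbb{D}$ and fixes the origin. By Schwarz's lemma $\omega(z)=z\,q(z)$ with $q$ a Schur function (analytic on $\mathbb{D}$ with $|q|\le1$), and then
\[
p(z)=\frac{1+z\,q(z)}{1-z\,q(z)}=1+2\sum_{k\ge1}\bigl(z\,q(z)\bigr)^{k},
\]
so matching the coefficient of $z$ gives $q(0)=\gamma_1$.

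Next I would apply the Schwarz--Pick device three times: if $\phi$ is a Schur function with $\phi(0)=a$, then $(\phi-a)/(1-\bar a\,\phi)$ is Schur and vanishes at $0$, hence equals $z\,\widetilde\phi(z)$ for a Schur function $\widetilde\phi$, i.e. $\phi(z)=\bigl(a+z\widetilde\phi(z)\bigr)/\bigl(1+\bar a\,z\widetilde\phi(z)\bigr)$. Writing $q$ in terms of $\gamma_1$ and a Schur function $g$, then $g$ in terms of $\mu:=g(0)$ and a Schur function $h$, then $h$ in terms of $\rho:=h(0)$ and a Schur function $k$, and finally setting $\psi:=k(0)$, we automatically obtain $|\mu|\le1$, $|\rho|\le1$, $|\psi|\le1$. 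A short expansion yields
\[
g(z)=\mu+(1-|\mu|^2)\rho\,z+(1-|\mu|^2)\bigl[(1-|\rho|^2)\psi-\bar\mu\rho^2\bigr]z^2+O(z^3),
\]
and substituting this into $q(z)=\bigl(\gamma_1+zg(z)\bigr)/\bigl(1+\gamma_1 zg(z)\bigr)$ (here $\bar\gamma_1=\gamma_1$) gives $q(z)=\gamma_1+\gamma_2z+\gamma_3z^2+\gamma_4z^3+\cdots$ with
\[
\gamma_2=(1-\gamma_1^2)\mu,\qquad \gamma_3=(1-\gamma_1^2)\bigl[(1-|\mu|^2)\rho-\gamma_1\mu^2\bigr],
\]
together with an analogous closed form for $\gamma_4$.

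Finally I would read off $c_2,c_3,c_4$ from $p(z)=1+2\sum_{k\ge1}\bigl(zq(z)\bigr)^{k}$, namely $c_2=2\gamma_2+2\gamma_1^2$, $c_3=2\gamma_3+4\gamma_1\gamma_2+2\gamma_1^3$, and $c_4=2\gamma_4+4\gamma_1\gamma_3+2\gamma_2^2+6\gamma_1^2\gamma_2+2\gamma_1^4$. Inserting the formulas for the $\gamma_j$, writing $\gamma_1=c_1/2$ so that $1-\gamma_1^2=\nu/4$ with $\nu=4-c_1^2$, and using $\nu^2=(4-c_1^2)\nu$ to merge the two $\mu^2$-terms into $(4-3c_1^2)\nu\mu^2$, produces the three displayed identities (the last one with the evident correction $c\mapsto c_1$ in the final factor). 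The first two are the familiar Libera--Zlotkiewicz formulas and come out essentially at once; the one genuinely delicate point is the degree-four bookkeeping for $c_4$: expanding the triply nested M\"obius composition up to order $z^3$ and correctly collecting the several monomials $\mu$, $\mu^2$, $\mu^3$, $\mu(1-|\mu|^2)\rho$, $\bar\mu\rho^2$, $(1-|\mu|^2)(1-|\rho|^2)\psi$ is where I would focus all the care.
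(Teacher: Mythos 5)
Your proposal is correct, but note that the paper itself offers no proof of this lemma at all: it simply cites Pommerenke for the $c_2$ formula, Libera--Zlotkiewicz for $c_3$, and Rath et al.\ for $c_4$, and those sources classically obtain the parametrizations from the Carath\'eodory--Toeplitz positivity conditions on the Toeplitz determinants of $(c_k)$. Your route---Cayley transform to a Schur function, then three applications of the Schwarz--Pick device peeling off $\gamma_1=c_1/2$, $\mu$, $\rho$, $\psi$---is a genuinely self-contained alternative, and I checked that it does reproduce the stated identities: with $\gamma_2=(1-\gamma_1^2)\mu$, $\gamma_3=(1-\gamma_1^2)\bigl[(1-|\mu|^2)\rho-\gamma_1\mu^2\bigr]$, $\gamma_4=(1-\gamma_1^2)\bigl[(1-|\mu|^2)\bigl((1-|\rho|^2)\psi-\bar\mu\rho^2\bigr)-2\gamma_1\mu(1-|\mu|^2)\rho+\gamma_1^2\mu^3\bigr]$, and your conversion formulas $c_2=2\gamma_2+2\gamma_1^2$, $c_3=2\gamma_3+4\gamma_1\gamma_2+2\gamma_1^3$, $c_4=2\gamma_4+4\gamma_1\gamma_3+2\gamma_2^2+6\gamma_1^2\gamma_2+2\gamma_1^4$, the three displayed identities come out exactly (the $\mu^2$-coefficient being $\nu^2-2c_1^2\nu=(4-3c_1^2)\nu$), including the observation that the lemma's stray ``$c$'' must be read as $c_1$. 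The advantage of your argument is that it produces $c_2$, $c_3$, $c_4$ in one uniform induction and makes the provenance of the constraints $|\mu|\le1$, $|\rho|\le1$, $|\psi|\le1$ transparent, whereas the determinant route gives $c_2$, $c_3$ quickly but becomes unwieldy at order four. Two small points you should still make explicit: the degenerate cases where a Schur parameter is unimodular ($c_1=2$, $|\mu|=1$, or $|\rho|=1$), in which the corresponding Schur function is a unimodular constant and the division in the Schwarz--Pick step is illegitimate---there the parametrization formula nevertheless holds with the subsequent parameter chosen arbitrarily (e.g.\ zero), so the identities persist; and the reality assumption $c_1\in[0,2]$, which you correctly flag as the reason no $\bar c_1$ appears.
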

\begin{lem}
	Let $\psi_1,\psi_2,\psi_3,\psi_4:[0,2]\rightarrow \mathbb{R}$ be a function definded by
	\begin{equation*}
		\begin{split}
			\psi_1(c):&=- 160 c^2 + 16 c^3 + 20 c^4 - 4 c^5+\frac{5 c^6}{4}\\\psi_2(c):&=32 c + 48 c^2 + 32 c^3 + 14 c^4 - 10 c^5-\frac{13 c^6}{2}\\\psi_3(c):&=-256 + 64 c + 276 c^2 - 48 c^3 - 82 c^4 + 8 c^5+\frac{29 c^6}{4} \\\psi_4(c):&=320 - 32 c - 272 c^2 - 32 c^3 + 76 c^4 + 10 c^5 - 7 c^6\\\psi_5(c):&=-64 - 64 c + 48 c^2 + 32 c^3 - 12 c^4 - 4 c^5 + c^6
		\end{split}
	\end{equation*}
	Then following are true
	
	\textbf{(a)} $\psi_1(c)\leq 0.$ for $c\in[0,2]$
	
	\textbf{(b)} $\psi_1(c)+\psi_2(c)\leq 0.$ for $c\in (\frac{87137}{250000},2]$
	
	\textbf{(c)} $\psi_1(c)+\psi_2(c)+\psi_3(c)\leq 0.$ for $c\in \left(\frac{87137}{250000},\frac{4511}{4000}\right]$
	
	\textbf{(d)} $\psi_1(c)+\psi_2(c)+\psi_3(c)+0.6\psi_4(c)\leq 0.$ for $c\in \left[\frac{4511}{4000},2\right]$
	
	\textbf{(e)} $\psi_5(c)\leq 0.$ for $c\in[0,2]$
		\begin{proof}[Proof(a)]
			Since $4 - c^2>0$ for $c\in[0,2]$
			\begin{equation*}
					\begin{split}
						\psi_1(c):&=- 160 c^2 + 16 c^3 + 20 c^4 - 4 c^5+\frac{5 c^6}{4}\\&=-48 c^2 - \frac{3 c^6}{4} - 2 c^2 (4 - c^2) (14 - 2 c + c^2)\leq 0
					\end{split}
			\end{equation*}
		\end{proof}
	\begin{proof}[Proof(b)]
		Let $c=\frac{87137 }{250000}t,1< t\leq\frac{500000}{87137}$. Then
		\begin{equation*}
			\begin{split}
				\psi_1(c(t))+\psi_2(c(t)):=&11.1535 t-13.6064 t^2+2.03249 t^3+0.501798 t^4\\&-0.072018 t^5-0.00941315 t^6\\\leq&0.0094t (1-t)(17.189 -7.9049 t+t^2) (69.0293 +16.5646 t+t^2)\leq 0
			\end{split}
		\end{equation*}
	\end{proof}
	\begin{proof}[Proof(c)]
	Let $c=\frac{87137 }{250000}t,1< t\leq\frac{563875}{174274}$. Then
	\begin{equation*}
		\begin{split}
			\psi_1(c(t))+\psi_2(c(t))+\psi_3(c(t)):=&-256.+33.4606 t+19.9237 t^2-0.708421 t^4\\&-0.0308649 t^5+0.00358596 t^6\\\leq&0.00358596(-18.403+t)(8.71052 +t)\\&(15.6059 -7.89366 t+t^2) (28.5372 +8.97902 t+t^2)\leq 0
		\end{split}
	\end{equation*}
\end{proof}
	\begin{proof}[Proof(d)]
	Let $c=\frac{4511}{4000}t,1\leq t\leq\frac{8000}{4511}$. Then
	\begin{equation*}
		\begin{split}
			\psi_1+\psi_2+\psi_3+06\psi_4=&64+72.176 t-137.357 t^2-45.8974 t^3\\&+45.2907 t^4+7.29666 t^5-10.286 t^6\\\leq&10.286 (1-t) (0.511168 +t)\\&(4.07379-3.46659 t+t^2)(3.06843 +3.21981 t+t^2)\leq 0
		\end{split}
	\end{equation*}
\end{proof}
\begin{proof}[Proof(e)]
	Since $4 - c^2>0$ for $c\in[0,2]$
	\begin{equation*}
		\begin{split}
			\psi_1=(4 - c^2)^2(-4-4 c+c^2)\leq 0
		\end{split}
	\end{equation*}
\end{proof}
\end{lem}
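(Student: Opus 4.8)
The statement collects five elementary one-variable polynomial inequalities, and the common strategy is to rewrite each expression as a product of factors whose signs are transparent on the interval in question, using throughout the basic fact that $4-c^2\ge 0$ for $c\in[0,2]$. Parts (a) and (e) will follow from an exact algebraic decomposition, while (b)--(d) will be reduced, after an affine rescaling of the variable, to locating the real zeros of a degree-six polynomial.

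For (a) the plan is to verify, by direct expansion, the identity
\begin{equation*}
\psi_1(c)=-48c^2-\tfrac34 c^6-2c^2\bigl(4-c^2\bigr)\bigl(14-2c+c^2\bigr),
\end{equation*}
after which nonpositivity is read off termwise: the first two summands are visibly $\le 0$, and in the third one has $4-c^2\ge 0$ together with $14-2c+c^2=(c-1)^2+13>0$, so the product is $\ge 0$ and enters with a minus sign. For (e) the analogous identity is
\begin{equation*}
\psi_5(c)=\bigl(4-c^2\bigr)^2\bigl(c^2-4c-4\bigr),
\end{equation*}
again checked by expansion; here $(4-c^2)^2\ge 0$ while $c^2-4c-4=(c-2)^2-8\le -4<0$ on $[0,2]$, giving $\psi_5(c)\le 0$ immediately.

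For (b), (c) and (d) the plan is to substitute $c=\alpha t$, with $\alpha=\frac{87137}{250000}$ in (b) and (c) and $\alpha=\frac{4511}{4000}$ in (d), the scaling being chosen precisely so that the left endpoint of the stated $c$-interval corresponds to $t=1$ (and the right endpoint to $\frac{500000}{87137}$, $\frac{563875}{174274}$, $\frac{8000}{4511}$, respectively). Writing the combined expression as a degree-six polynomial $P(t)$, I would then exhibit a factorization of the shape
\begin{equation*}
P(t)=\kappa\,L(t)\,Q_1(t)\,Q_2(t),
\end{equation*}
where $\kappa>0$, $L$ is a product of linear factors, and $Q_1,Q_2$ are monic quadratics. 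The argument is a sign count: I would check that each $Q_i$ has strictly negative discriminant, so it contributes no real zeros and stays positive for all real $t$; consequently every real zero of $P$ comes from $L$, and I would verify that these zeros lie either at $t=1$ or outside the relevant $t$-interval. Since $P$ is continuous and has no zero in the open interval, its sign there is constant, and evaluating $P$ at one interior point (where it is negative) forces $P(t)\le 0$ throughout. Equivalently, on $t\ge 1$ the explicit linear factors are sign-definite—for instance $(1-t)\le 0$ in (b), and a factor such as $(t-18.4)<0$ in (c)—so the product of the positive constant, the sign-definite linear part, and the two positive quadratics is $\le 0$.

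The main obstacle is to make the factorizations in (b)--(d) rigorous rather than merely numerical, since the displayed quadratic and linear factors carry irrational (rounded) coefficients. The delicate points are (i) certifying that each quadratic discriminant is genuinely negative—comfortable in (b) and (d), but tight in (c); (ii) confirming that the chosen rational endpoint $\alpha$ really lies at or beyond the sign-change point, which amounts to evaluating $P$ at the exact rational and seeing it is $\le 0$; and (iii) ensuring that replacing the exact polynomial by its rounded factored form cannot move a zero into the interval. To bypass the numerics entirely I would fall back on a self-contained calculus argument for each part: compute $P$ at the two endpoints, show it is $\le 0$ there, and either control the sign of $P'$ on the interval or apply Sturm's theorem to count real roots exactly, so that no zero occurs inside and the endpoint sign propagates. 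This monotonicity route is the cleanest way to secure (b), (c) and (d) without relying on the approximate factorizations, and together with the exact arguments for (a) and (e) it completes the proof of the lemma.
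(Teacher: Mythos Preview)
Your proposal is correct and follows essentially the same route as the paper: the exact algebraic decompositions you give for (a) and (e) are identical to the paper's, and for (b)--(d) the paper performs precisely the affine rescalings $c=\alpha t$ with your values of $\alpha$ and then displays the same type of factorization $\kappa\,L(t)\,Q_1(t)\,Q_2(t)$ with rounded coefficients. Your added concern about the rigor of the numerical factorizations is well placed---the paper simply writes ``$\le$'' between the expanded polynomial and its approximate factored form without further justification---and your suggested fallback via Sturm's theorem or a direct monotonicity/endpoint argument would in fact make the argument cleaner than what the paper presents.
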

\begin{lem}
	Let $\Psi:[0,\frac{87137}{250000}]\times(0,0.25)\rightarrow \mathbb{R}$ be a function definded by
	\begin{equation*}
			\Psi(c,x)=320+\psi_1(c)+\psi_2(c)x+\psi_3(c)x^2+\psi_4(c)x^3+\psi_5(c)x^4
	\end{equation*}
where $\psi_1,\psi_2,\psi_3,\psi_4,,\psi_5$ define as lemma 1.2 for $c\in[0,\frac{87137}{250000}]$,
Then $\Psi(c,x)\leq 320 $ for $0\leq c\leq\frac{87137}{250000}$ and $0<x<0.25$
\begin{proof}
	Since $\phi_4(c)>0$ and  $\phi_5(c)<0$ in $0\leq c\leq\frac{87137}{250000}$
	\begin{equation*}
		\begin{split}
			\Psi(c,x)\leq&320+\psi_1(c)+\psi_2(c)x+\left(\psi_3(c)+0.25\psi_4(c)\right)x^2\\:=&h(c,x),~\text{with}~c\in\left[0,\frac{87137}{250000}\right]~\text{and}~x\in(0,0.25).
		\end{split}
	\end{equation*}
$\partial h/\partial x=0$ iff

\begin{align*}
x= \frac{-64 c - 96 c^2 - 64 c^3 - 28 c^4 + 20 c^5 + 13 c^6}{-704 + 224 c + 832 c^2 - 224 c^3 - 252 c^4 + 42 c^5 + 22 c^6}:=x_0\in(0,0.25)
\end{align*}

and

\begin{align*}
\frac{\partial^2\Psi}{\partial x^2}(c,x_0)=-\left(4-c^2\right)\left(88 - 28 c - 82 c^2 + 21 c^3 + 11 c^4\right)<0
\end{align*}

Therefore  $\Psi(c,x)$ attains maximum at $(c,x_0).$

Hence
\begin{equation*}
\Psi(c,x)\leq\Psi(c,x_0)=
\frac{\begin{split}&225280 - 71680 c - 321536 c^2 + 103936 c^3 \\&+ 148224 c^4 - 39936 c^5 - 
		19856 c^6 + 7816 c^7 \\&- 80 c^8 - 662 c^9 - 59 c^{10}\end{split}}{8\left(88 - 28 c - 82 c^2 + 21 c^3 + 11 c^4\right)^2}
<320
	\end{equation*}
\end{proof}
\end{lem}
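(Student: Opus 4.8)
The plan is to follow the route the statement already outlines: first peel off the two top-order powers of $x$ by an elementary monotonicity estimate so that only a concave quadratic in $x$ remains; then maximise that quadratic in $x$ explicitly; and finally reduce to a one-variable polynomial inequality on the short interval $I:=[0,\tfrac{87137}{250000}]$ (note $\tfrac{87137}{250000}<0.35$). I would begin by recording the signs on $I$: $\psi_4(c)>0$ (clear on $I$ since $c\le0.35$) and, by Lemma 1.2(e), $\psi_5(c)=(4-c^2)^2(c^2-4c-4)<0$. Since $0<x<\tfrac14$ forces $x^3<\tfrac14x^2$ and $x^4>0$, one gets $\psi_4(c)x^3+\psi_5(c)x^4<\tfrac14\psi_4(c)x^2$, whence
\[
\Psi(c,x)<h(c,x):=320+\psi_1(c)+\psi_2(c)x+A(c)x^2,\qquad A(c):=\psi_3(c)+\tfrac14\psi_4(c),
\]
and it remains to bound $h$ from above by $320$.

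Next I would exploit the two factorisations
\[
2A(c)=-(4-c^2)\bigl(88-28c-82c^2+21c^3+11c^4\bigr),\qquad \psi_2(c)=\tfrac12\,c\,(4-c^2)\bigl(13c^3+20c^2+24c+16\bigr),
\]
obtained by direct expansion, which carry the rest of the argument. On $I$ the quartic $88-28c-82c^2+21c^3+11c^4$ exceeds $88-28(0.35)-82(0.35)^2>68$, so $A(c)<0$ and $h(c,\cdot)$ is a downward parabola in $x$ with vertex $x_0(c):=-\psi_2(c)/\bigl(2A(c)\bigr)$. From the factorisation $\psi_2(c)\ge0$ on $I$, so $x_0(c)\ge0$; and $x_0(c)<\tfrac14$ is equivalent to $2\psi_2(c)+A(c)<0$, which holds on $I$ since $2\psi_2(c)+A(c)=-176+120c+304c^2+8c^3-35c^4-9.5c^5-7.5c^6$ and its positive part $120c+304c^2+8c^3$ is at most $80<176$ there. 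Hence $\sup_{0<x<1/4}h(c,x)\le h(c,x_0(c))=320+\psi_1(c)-\dfrac{\psi_2(c)^2}{4A(c)}$.

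It then suffices to prove $\psi_1(c)\le\psi_2(c)^2/\bigl(4A(c)\bigr)$ on $I$. Clearing the positive factor $8\bigl(88-28c-82c^2+21c^3+11c^4\bigr)$ and cancelling one $(4-c^2)$ between $\psi_2(c)^2$ and $4A(c)$ via the factorisations above, this becomes
\[
8\,\psi_1(c)\bigl(88-28c-82c^2+21c^3+11c^4\bigr)+c^2(4-c^2)\bigl(13c^3+20c^2+24c+16\bigr)^2\le0\qquad\text{on }I,
\]
which is precisely the inequality $h(c,x_0(c))\le320$ recorded in the statement (there $h(c,x_0(c))$ equals $320$ plus the left side above divided by $8\bigl(88-28c-82c^2+21c^3+11c^4\bigr)$). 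To close it I would invoke Lemma 1.2(a) in its factored form $\psi_1(c)=-48c^2-\tfrac34c^6-2c^2(4-c^2)(14-2c+c^2)$, keep only the term $-48c^2$ from $\psi_1(c)$ and bound $4-c^2\le4$ (both legitimate, the discarded parts of $\psi_1$ being $\le0$ and multiplied by a positive quantity); the display then follows from the cruder
\[
384\bigl(88-28c-82c^2+21c^3+11c^4\bigr)\ \ge\ 4\bigl(13c^3+20c^2+24c+16\bigr)^2\qquad\text{on }I,
\]
whose left side is at least $384\cdot68$ while its right side is at most $4\cdot(27.5)^2$ on $I$. Chaining the three reductions gives $\Psi(c,x)<h(c,x)\le h(c,x_0(c))\le320$, as required.

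The argument is elementary throughout; there is no deep obstacle. The only steps needing genuine care are the two factorisations of $A(c)$ and $\psi_2(c)$ — they are exactly what make $h(c,x_0(c))$ a manageable rational function and collapse the final step to a wildly slack estimate — together with the verification that the vertex $x_0(c)$ really lies in $[0,\tfrac14)$, without which $h(c,x_0(c))$ would not bound the supremum of $h(c,\cdot)$ over $(0,\tfrac14)$. Given the factorisations, the closing polynomial inequality has so much room on the tiny interval $I$ that no delicate estimation is needed there.
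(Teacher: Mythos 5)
Your proposal is correct and follows essentially the same route as the paper: drop the $x^3$ and $x^4$ terms using $\psi_4>0$, $\psi_5<0$ and $x<\tfrac14$ to get the concave quadratic $h(c,x)$, maximize over $x$ at the vertex $x_0$, and bound $h(c,x_0)\le 320$. In fact you supply details the paper only asserts — the factorisations of $2A(c)$ and $\psi_2(c)$, the check that $A(c)<0$ and $x_0\in[0,\tfrac14)$ on the interval, and an explicit elementary estimate proving the final rational-function inequality — so the argument is, if anything, more complete than the printed one.
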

\begin{lem}
	Let $\Psi:\left[0,\frac{87137}{250000}\right]\times[0.25,1]\rightarrow \mathbb{R}$ be a function definded by
	\begin{equation*}
		\Phi(c,x)=\phi_1(x)+\phi_2(x)c+\phi_3(x)c^2+\phi_4(x)c^3+\phi_5(x)c^4+\phi_6(x)c^5+\phi_7(x)c^6
	\end{equation*}
	where for $x\in[0.25,1]$,
	\begin{equation*}
		\begin{split}
			\phi_1(x):&=-256 x^2 + 320 x^3 - 64 x^4\\\phi_2(x):&=32 x + 64 x^2 - 32 x^3 - 64 x^4\\\phi_3(x):&=-160 + 48 x + 276 x^2 - 272 x^3 + 48 x^4 \\\phi_4(x):&=16 + 32 x - 48 x^2 - 32 x^3 + 32 x^4\\\phi_5(x):&=20 + 14 x - 82 x^2 + 76 x^3 - 12 x^4\\\phi_6(x):&=-4 - 10 x + 8 x^2 + 10 x^3 - 4 x^4\\\phi_7(x):&=\frac{5}{4} - \frac{13 x}{2} +\frac{29 x^2}{4} - 7 x^3 + x^4
		\end{split}
	\end{equation*}
	Then $\Phi(c,x)< 0 $ for $0\leq c\leq\frac{87137}{250000}$ and $0.25\leq x<1$
	\begin{proof}
		Since $\phi_1,\phi_1+\phi_2,\phi_1+\phi_2+\phi_3,\phi_1+\phi_2+\phi_3+\phi_4,\phi_1+\phi_2+\phi_3+\phi_4+\phi_5,\phi_6,\phi_7\leq0$ for $0.25\leq x<1$
		\begin{equation*}
			\Phi(c,x)<\left(\phi_1(x)+\phi_2(x)+\phi_3(x)+\phi_4(x)+\phi_5(x)\right)c^4+\phi_6(x)c^5+\phi_7(x)c^6<0
		\end{equation*}
	\end{proof}
\end{lem}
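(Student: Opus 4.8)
\medskip
\noindent\emph{Proof strategy.}\quad The plan is to exploit that the variable $c$ is confined to $\left[0,\frac{87137}{250000}\right]\subset[0,1)$, so that $1,c,c^{2},\dots,c^{6}$ is a non-increasing sequence and $c^{k}-c^{k+1}=c^{k}(1-c)\geq 0$ for every $k\geq 0$. This is exactly the setting for an Abel (summation-by-parts) rearrangement of $\Phi(c,x)=\sum_{k=0}^{6}\phi_{k+1}(x)\,c^{k}$ in the variable $c$: expressing $\Phi$ through the partial sums $S_{j}(x):=\phi_{1}(x)+\cdots+\phi_{j}(x)$ reduces the two-variable inequality to a short list of one-variable sign conditions, which can then be settled in the same style as $\psi_{1}$ and $\psi_{5}$ were in Lemma~1.2.

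Concretely, I would begin from the elementary identity, valid for every $c$ (with the argument $x$ suppressed),
\[
\phi_{1}+\phi_{2}c+\phi_{3}c^{2}+\phi_{4}c^{3}+\phi_{5}c^{4}
=S_{1}(1-c)+S_{2}(c-c^{2})+S_{3}(c^{2}-c^{3})+S_{4}(c^{3}-c^{4})+S_{5}\,c^{4},
\]
where $S_{j}=\phi_{1}+\cdots+\phi_{j}$. Granting that $S_{1},S_{2},S_{3},S_{4},S_{5},\phi_{6},\phi_{7}\leq 0$ on $[0.25,1)$ with moreover $S_{1}=\phi_{1}<0$ there, each coefficient $c^{j-1}-c^{j}=c^{j-1}(1-c)$ is $\geq 0$ on $[0,1]$, so the first four terms above are $\leq 0$ and the first is $<0$ whenever $0<c<1$; hence the left-hand side is $<S_{5}(x)\,c^{4}$. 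Adding $\phi_{6}(x)c^{5}+\phi_{7}(x)c^{6}$ and using $S_{5},\phi_{6},\phi_{7}\leq 0$ together with $c\geq 0$ then yields $\Phi(c,x)<\bigl(S_{5}(x)+\phi_{6}(x)c+\phi_{7}(x)c^{2}\bigr)c^{4}\leq 0$ for $0<c\leq\frac{87137}{250000}$, while for $c=0$ one simply has $\Phi(0,x)=\phi_{1}(x)=-64x^{2}(x-1)(x-4)<0$ on $[0.25,1)$. This settles the lemma.

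The remaining task, and the only place where genuine (though routine) work is needed, is to verify those seven sign conditions on $[0.25,1)$. Each $S_{j}$ and each of $\phi_{6},\phi_{7}$ is a polynomial in $x$ of degree at most four, and I would handle it exactly as in Lemma~1.2, namely by exhibiting it as a constant times linear and quadratic factors whose signs on $[0.25,1)$ are transparent. For instance $S_{1}(x)=\phi_{1}(x)=-64x^{2}(x-1)(x-4)$, $S_{2}(x)=\phi_{1}(x)+\phi_{2}(x)=32x(1-x)^{2}(1-4x)$, and $\phi_{6}(x)=2(1-x)(1+x)(2x^{2}-5x-2)$ are manifestly non-positive there, the factor $1-4x$ being $\leq 0$ precisely because $x\geq 0.25$ (which is what pins the left endpoint of the $x$-interval), and $2x^{2}-5x-2$ having both real roots outside $[0.25,1)$; the remaining polynomials $S_{3},S_{4},S_{5},\phi_{7}$ succumb to the same factorisation-and-sign procedure. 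Once these estimates are in place, the Abel rearrangement above closes the argument.
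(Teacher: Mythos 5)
Your proof is correct and is essentially the paper's own argument: the paper's one-line estimate $\Phi(c,x)<\left(\phi_1+\phi_2+\phi_3+\phi_4+\phi_5\right)c^4+\phi_6c^5+\phi_7c^6<0$ is exactly the Abel (summation-by-parts) rearrangement you write out, resting on the same sign conditions $S_1,\dots,S_5,\phi_6,\phi_7\leq 0$ on $[0.25,1)$, which the paper asserts without verification just as you defer $S_3,S_4,S_5,\phi_7$ to routine factorisation. If anything your version is slightly more careful, since you treat $c=0$ separately (where $\Phi(0,x)=\phi_1(x)<0$), a point at which the paper's strict-inequality chain degenerates.
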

\begin{lem}
	Let $\Psi:\left(\frac{87137}{250000},\frac{4511}{4000}\right]\times(0,0.6)\rightarrow \mathbb{R}$ be a function definded as in lemma 1.2.
	Then $\Psi(c,x)\leq 320 $ for $\frac{87137}{250000}< c\leq \frac{4511}{4000}$ and $0<x<0.6$
	\begin{proof}
		Since $\phi_4(c)>0,\phi_1(c),\phi_1(c)+\phi_2(c)<0,\phi_1(c)+\phi_2(c)+\phi_3(c)<0,\phi_1(c)+\phi_2(c)+\phi_3(c)+0.6\phi_4(c)<0$ and  $\phi_5(c)<0$ in for $\frac{87137}{250000}< c\leq \frac{4511}{4000}$
		\begin{equation*}
			\begin{split}
				\Psi(c,x)\leq&320+(\phi_1(c)+\phi_2(c)+\phi_3(c)+0.6\phi_4(c))x^2+\phi_5(c)x^4<320.
			\end{split}
		\end{equation*}
	\end{proof}
\end{lem}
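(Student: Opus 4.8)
\emph{Proof proposal.} Write $D(c,x):=\Psi(c,x)-320=\psi_1(c)+\psi_2(c)\,x+\psi_3(c)\,x^2+\psi_4(c)\,x^3+\psi_5(c)\,x^4$; the claim is that $D(c,x)\le 0$ for $(c,x)\in\left(\tfrac{87137}{250000},\tfrac{4511}{4000}\right]\times(0,0.6)$. The plan is to strip off the two top-degree terms in $x$ and reduce to a quadratic in $x$ whose coefficients are polynomials in $c$. By Lemma~1.2(e), $\psi_5(c)\le 0$ on $[0,2]$, so $\psi_5(c)x^4\le 0$ and this term is simply discarded. Next I would record that $\psi_2(c)>0$ and $\psi_4(c)>0$ on the $c$-interval in play: indeed $\psi_2(c)=c\bigl(32+48c+32c^2+14c^3-10c^4-\tfrac{13}{2}c^5\bigr)$ and the bracket is positive there, while one checks that $\psi_4$ is decreasing on $\bigl[0,\tfrac{4511}{4000}\bigr]$ with $\psi_4\bigl(\tfrac{4511}{4000}\bigr)>0$ (strict positivity really uses that the right endpoint lies below $2$, since $\psi_4(2)=0$). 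Because $0<x<0.6$ forces $x^3<0.6\,x^2$, we obtain $\psi_4(c)x^3<0.6\,\psi_4(c)x^2$ and hence
\[
D(c,x)<\psi_1(c)+\psi_2(c)\,x+C(c)\,x^2=:Q_c(x),\qquad C(c):=\psi_3(c)+0.6\,\psi_4(c).
\]

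It then remains to bound the quadratic $Q_c$ on $(0,0.6)$, and I would split on the sign of the leading coefficient $C(c)$ (which does change sign on the interval). If $C(c)\le 0$, then $C(c)x^2\le 0$ and, using $\psi_2(c)>0$ together with $x<0.6$, $Q_c(x)<\psi_1(c)+0.6\,\psi_2(c)$. If $C(c)>0$, the vertex of $Q_c$ lies at $x^\ast=-\psi_2(c)/(2C(c))<0$, so $Q_c$ is increasing on $(0,0.6)$ and $Q_c(x)<Q_c(0.6)=\psi_1(c)+0.6\,\psi_2(c)+0.36\,\psi_3(c)+0.216\,\psi_4(c)$. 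Consequently the lemma reduces to the two single-variable inequalities
\[
\psi_1(c)+0.6\,\psi_2(c)\le 0\qquad\text{and}\qquad \psi_1(c)+0.6\,\psi_2(c)+0.36\,\psi_3(c)+0.216\,\psi_4(c)\le 0
\]
on $\left(\tfrac{87137}{250000},\tfrac{4511}{4000}\right]$; proving both on the whole interval handles both cases at once and avoids having to locate the zero of $C$.

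The main obstacle is exactly the verification of these two degree-six polynomial inequalities in $c$. They do not factor into manifestly signed pieces the way $\psi_1$ and $\psi_5$ did in the proofs of Lemma~1.2(a),(e), so I would use the rescaling device already employed in Lemma~1.2(b)--(d): substitute $c=\tfrac{87137}{250000}\,t$ (or $c=\tfrac{4511}{4000}\,t$), clear denominators, and factor the resulting polynomial as a sign-definite leading factor times two quadratics with no root in the relevant $t$-range; alternatively, factor out $c$ (respectively a positive constant) and show the residual quintic is negative on the interval via a monotonicity argument on its derivative. Once $D(c,x)\le 0$ is established, $\Psi(c,x)=320+D(c,x)\le 320$, which is the assertion (the estimates above are in fact strict). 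If one wishes to bypass the case split altogether, replacing the constant $0.6$ by $0.8$ in the estimate $x^3<0.6\,x^2$ — still valid because $x<0.6<0.8$ — renders $C(c)=\psi_3(c)+0.8\,\psi_4(c)$ positive on the whole interval, leaving only the convex case and a single residual inequality; the trade-off is that the resulting bound is nearly tight near $c=\tfrac{87137}{250000}$, so the two-case version is the safer route.
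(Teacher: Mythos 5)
Your reduction is sound, and it is a genuinely different route from the paper's. The paper pushes everything into the $x^2$ coefficient by an Abel-type cascade: from $\psi_1\le 0$ and $\psi_1+\psi_2\le 0$ it gets $\psi_1+\psi_2x\le(\psi_1+\psi_2)x^2$ (wait, more precisely $\psi_1\le\psi_1x$ and then $(\psi_1+\psi_2)x\le(\psi_1+\psi_2)x^2$), uses the same $x^3<0.6x^2$ step as you on the $\psi_4$ term, drops $\psi_5x^4\le0$, and concludes $\Psi\le 320+(\psi_1+\psi_2+\psi_3+0.6\psi_4)x^2+\psi_5x^4<320$ from the asserted negativity of $\psi_1+\psi_2+\psi_3+0.6\psi_4$ on this $c$-range. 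You instead keep the quadratic $\psi_1+\psi_2x+(\psi_3+0.6\psi_4)x^2$ and maximize it on $(0,0.6)$ by the vertex/endpoint argument (the sign facts $\psi_2>0$, $\psi_4>0$ you use are correct and easy), reducing the lemma to the two one-variable inequalities $\psi_1+0.6\psi_2\le0$ and $\psi_1+0.6\psi_2+0.36\psi_3+0.216\psi_4\le0$ on $\left(\tfrac{87137}{250000},\tfrac{4511}{4000}\right]$. Both of these are in fact true there (with comfortable margins), so the plan would succeed; your approach buys a transparent case analysis in $x$ at the cost of new polynomial inequalities in $c$, whereas the paper recycles the partial-sum inequalities it has already set up in Lemma 1.2.

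The gap is that you never verify those two degree-six inequalities: you explicitly defer them ("I would use the rescaling device \dots and factor \dots"), and that verification is the entire technical content of this lemma, so as written the proof is incomplete. It is cheaply repairable, and without any new factorization: $\psi_1+0.6\psi_2=0.4\,\psi_1+0.6\,(\psi_1+\psi_2)\le0$ by Lemma 1.2(a),(b) (note $c>\tfrac{87137}{250000}$), and
\begin{equation*}
\psi_1+0.6\,\psi_2+0.36\,\psi_3+0.216\,\psi_4
=0.4\,\psi_1+0.24\,(\psi_1+\psi_2)+0.36\,\bigl(\psi_1+\psi_2+\psi_3+0.6\,\psi_4\bigr),
\end{equation*}
so the second inequality follows from exactly the sign facts the paper's own proof invokes. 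Be aware, though, that the last of these facts, $\psi_1+\psi_2+\psi_3+0.6\,\psi_4\le0$ on $\left(\tfrac{87137}{250000},\tfrac{4511}{4000}\right]$, is only proved in Lemma 1.2(d) for $c\in\left[\tfrac{4511}{4000},2\right]$; on the present interval it is true but needs its own (routine) one-variable check, a point on which the paper is also silent — so whichever route you take, that single polynomial verification must actually be carried out rather than cited or sketched.
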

\begin{lem}
	Let $\Gamma:\left(\frac{87137}{250000},1\right]\times[0.6,1]\rightarrow \mathbb{R}$ be a function definded by
	\begin{equation*}
		\Gamma(c,x)=\gamma_1(x)+\gamma_2(x)c+\gamma_3(x)c^2+\gamma_4(x)c^3+\gamma_5(x)c^4+\gamma_6(x)c^5+\phi_7(x)c^6
	\end{equation*}
	where for $x\in[0.6,1]$,
	\begin{equation*}
		\begin{split}
			\gamma_1(x):&=-256 x^2 + 320 x^3 - 64 x^4\\\gamma_2(x):&= 96 x^2 - 32 x^3 - 64 x^4\\\gamma_3(x):&=164 x^2 - 272 x^3 + 48 x^4 \\\gamma_4(x):&=- 32 x^3 + 32 x^4\\\gamma_5(x):&=- 48 x^2 + 76 x^3 - 12 x^4\\\gamma_6(x):&=-6x^2 + 10 x^3 - 4 x^4\\\gamma_7(x):&=2 x^2 - 7 x^3 + x^4
		\end{split}
	\end{equation*}
	Then $\Phi(c,x)< 0 $ for $\frac{87137}{250000}< c\leq1$ and $0.6\leq x\leq1$
	\begin{proof}
		Since $\gamma_1,\gamma_1+\gamma_2,\gamma_1+\gamma_2+\gamma_3,\gamma_4,\gamma_1+\gamma_2+\gamma_3+\gamma_4+\gamma_5,\gamma_6,\gamma_7\leq0$ for $0.6\leq x<1$
		\begin{equation*}
			\Phi(c,x)<\left(\gamma_1+\gamma_2+\gamma_3+\gamma_4+\gamma_5\right)c^4+\gamma_6(x)c^5+\gamma_7(x)c^6<0
		\end{equation*}
	\end{proof}
\end{lem}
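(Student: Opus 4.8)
The plan is to exploit that on this box $c$ lies in $\left(\tfrac{87137}{250000},1\right]$, hence $0<c\le 1$ and $1\ge c\ge c^{2}\ge c^{3}\ge c^{4}\ge c^{5}\ge c^{6}>0$, and to rewrite $\Gamma(c,x)$, viewed as a polynomial in $c$, by summation by parts. First I would set $S_{k}(x):=\gamma_{1}(x)+\cdots+\gamma_{k}(x)$ for $k=1,\dots,5$ (with $S_{0}:=0$), so that $\gamma_{k}=S_{k}-S_{k-1}$; applying Abel's rearrangement to the first five terms gives
\[
\sum_{k=1}^{5}\gamma_{k}(x)\,c^{k-1}=\sum_{k=1}^{4}S_{k}(x)\bigl(c^{k-1}-c^{k}\bigr)+S_{5}(x)\,c^{4},
\]
whence
\[
\Gamma(c,x)=\sum_{k=1}^{4}S_{k}(x)\bigl(c^{k-1}-c^{k}\bigr)+S_{5}(x)\,c^{4}+\gamma_{6}(x)\,c^{5}+\gamma_{7}(x)\,c^{6}.
\]
Since $c^{k-1}-c^{k}\ge 0$ on $(0,1]$ and $c^{4},c^{5},c^{6}>0$, the inequality $\Gamma(c,x)<0$ reduces to showing $S_{1},S_{2},S_{3},S_{5},\gamma_{6},\gamma_{7}\le 0$ on $[0.6,1]$ with $S_{5}<0$ strictly there (and $S_{4}\le 0$, which is free from $S_{4}=S_{3}+\gamma_{4}$ once $S_{3},\gamma_{4}\le 0$).

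Next I would carry out the coefficient arithmetic and factor the resulting degree-$\le 4$ polynomials, expecting
\[
S_{1}=-64x^{2}(x-1)(x-4),\quad S_{2}=-32x^{2}(x-1)(4x-5),\quad S_{3}=4x^{2}\bigl(1+4x-20x^{2}\bigr),
\]
\[
\gamma_{4}=32x^{3}(x-1),\quad S_{5}=-4x^{2}\bigl(15x^{2}-15x+11\bigr),\quad \gamma_{6}=-2x^{2}(x-1)(2x-3),\quad \gamma_{7}=x^{2}\bigl(x^{2}-7x+2\bigr).
\]
On $[0.6,1]$ the linear factors $x-1$, $x-4$, $4x-5$, $2x-3$ are all $\le 0$, which disposes of $S_{1},S_{2},\gamma_{4},\gamma_{6}$ immediately. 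For $S_{3}$ the downward parabola $1+4x-20x^{2}$ has vertex at $x=0.1$ and value $-3.8$ at $x=0.6$, so it is negative on $[0.6,1]$; for $\gamma_{7}$ the upward parabola $x^{2}-7x+2$ is decreasing on $(-\infty,3.5]$ and equals $-1.84$ at $x=0.6$, so it is negative there; and $15x^{2}-15x+11$ has discriminant $225-660<0$, so $S_{5}<0$ for all $x\ne 0$. Feeding these back into the displayed expression makes every summand $\le 0$ with the term $S_{5}(x)c^{4}$ strictly negative, giving $\Gamma(c,x)<0$ on $\left(\tfrac{87137}{250000},1\right]\times[0.6,1]$; reading $\Phi$ as $\Gamma$ and $\phi_{7}$ as $\gamma_{7}$ in the statement, this is the assertion.

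I do not expect any genuine analytic obstacle; the work is bookkeeping — doing the summation by parts correctly, then confirming the seven factorizations and the placement of the two parabola vertices relative to $[0.6,1]$. The substantive choices are to group exactly the first \emph{five} of the $\gamma_{k}$'s and to restrict to $x\ge 0.6$: the latter is precisely the threshold beyond which all of $S_{1},\dots,S_{5}$ stay sign-definite (note the argument uses nothing about $c$ except $0<c\le 1$, with the bound $c\le 1$ essential for $c^{k-1}-c^{k}\ge 0$), which is why this lemma sits alongside the companion lemmas covering the other ranges of $c$ and $x$. If some $S_{k}$ failed to be sign-definite on $[0.6,1]$ one would need a finer subdivision or a different grouping, but the factorizations above show this does not happen.
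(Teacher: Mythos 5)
Your proposal is correct and is essentially the paper's own argument: the paper likewise bounds $\Gamma(c,x)$ by $\bigl(\gamma_1+\cdots+\gamma_5\bigr)c^4+\gamma_6 c^5+\gamma_7 c^6$ using the nonpositivity of the partial sums $\gamma_1,\ \gamma_1+\gamma_2,\ \gamma_1+\gamma_2+\gamma_3,\ \gamma_4,\ \gamma_1+\cdots+\gamma_5$ together with $0<c\le 1$, which is exactly your Abel-summation step. Your write-up merely makes the summation by parts and the sign checks (via the explicit factorizations of $S_1,\dots,S_5,\gamma_6,\gamma_7$) fully explicit, which the paper only asserts.
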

\begin{lem}
	Let $\Psi:\left(1,\frac{4511}{4000}\right]\times[0.6,1]\rightarrow \mathbb{R}$ be a function definded as in lemma 1.2.
	Then $\Psi(c,x)\leq 320 $ for $1< c\leq \frac{4511}{4000}$ and $0.6\leq x\leq1$
	\begin{proof}
		Since $\phi_1(c),\phi_1(c)+\phi_2(c)<0,\phi_1(c)+\phi_2(c)+\phi_3(c)<0$  for $1< c\leq 1.12775$
		\begin{equation*}
			\begin{split}
				\Psi(c,x)\leq&320+(\phi_1(c)+\phi_2(c)+\phi_3(c))x^2+\phi_4(c)x^3+\phi_5(c)x^4\\=&320+\left(-256 + 96 c + 164 c^2 - 48 c^4 - 6 c^5 + 2 c^6\right) x^2\\&+\left(320 - 32 c - 272 c^2 - 32 c^3 + 76 c^4 + 10 c^5 - 7 c^6\right) x^3\\&+\left(-64 - 64 c + 48 c^2 + 32 c^3 - 12 c^4 - 4 c^5 + c^6\right) x^4\\\leq&320 - 23 x^2 + 63 x^3 - 53 x^4\leq318.459.
			\end{split}
		\end{equation*}
	\end{proof}
\end{lem}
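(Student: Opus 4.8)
The statement to prove is that $\Psi(c,x)=320+\psi_1(c)+\psi_2(c)x+\psi_3(c)x^2+\psi_4(c)x^3+\psi_5(c)x^4\le 320$ on the rectangle $\bigl(1,\tfrac{4511}{4000}\bigr]\times[0.6,1]$, equivalently
\[
Q(c,x):=\psi_1(c)+\psi_2(c)x+\psi_3(c)x^2+\psi_4(c)x^3+\psi_5(c)x^4\le 0
\]
there. The plan is in three moves: first collapse the part of $Q$ of degree $\le 2$ in $x$ into a single $x^2$-term at the cost of a manifestly non-positive remainder; then, on the short interval $\bigl(1,\tfrac{4511}{4000}\bigr]$, replace the three surviving coefficients (which are degree-$6$ polynomials in $c$) by numerical constants; and finally maximise the resulting quartic in $x$ over $[0.6,1]$.

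For the first move I would use the identity
\[
\psi_1+\psi_2 x+\psi_3 x^2=(\psi_1+\psi_2+\psi_3)\,x^2+(1-x)\bigl(\psi_1(1+x)+\psi_2 x\bigr).
\]
Because $1<c\le\tfrac{4511}{4000}$ forces $c>\tfrac{87137}{250000}$, parts (a) and (b) of Lemma~1.2 give $\psi_1(c)\le 0$ and $\psi_1(c)+\psi_2(c)\le 0$, so $\psi_1(1+x)+\psi_2 x=\psi_1+x(\psi_1+\psi_2)\le 0$ for $x\ge 0$; together with $1-x\ge 0$ on $[0.6,1]$ the remainder is non-positive, whence
\[
\Psi(c,x)\le 320+\bigl(\psi_1(c)+\psi_2(c)+\psi_3(c)\bigr)x^2+\psi_4(c)\,x^3+\psi_5(c)\,x^4 .
\]
For the second move, on $c\in\bigl(1,\tfrac{4511}{4000}\bigr]$ I would show $\psi_1+\psi_2+\psi_3=-256+96c+164c^2-48c^4-6c^5+2c^6\le -23$, $\psi_4(c)\le 63$, and $\psi_5(c)\le -53$; each of these three polynomials is monotone on this interval (the first and the third increasing, the second decreasing), so its supremum is an endpoint value and the claimed bounds follow by evaluation. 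Since $x^2,x^3,x^4\ge 0$ on $[0.6,1]$, substituting gives $\Psi(c,x)\le g(x):=320-23x^2+63x^3-53x^4$.

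The last move is routine: $g'(x)=-x\bigl(212x^2-189x+46\bigr)$, and the quadratic $212x^2-189x+46$ has negative discriminant, hence stays positive, so $g'<0$ on $(0,1]$ and $g$ decreases on $[0.6,1]$; therefore $\Psi(c,x)\le g(0.6)<320$ throughout the rectangle, as required. The part I expect to be most delicate is the second move: one must certify the monotonicity (or otherwise the endpoint bounds) of the three degree-$6$ polynomials on $\bigl(1,\tfrac{4511}{4000}\bigr]$, which I would handle by verifying that each first derivative has a constant sign on that short interval. Once the reduction of the first move is set up, the rest is elementary.
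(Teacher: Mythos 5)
Your proposal is correct and follows essentially the same route as the paper: the same collapse of the degree-$\le 2$ part into the $x^2$-coefficient via $\psi_1\le 0$, $\psi_1+\psi_2\le 0$, the same numerical coefficient bounds $-23$, $63$, $-53$ leading to $320-23x^2+63x^3-53x^4$, and the same conclusion $\le g(0.6)\approx 318.46<320$. You merely supply the monotonicity/endpoint verifications that the paper leaves implicit.
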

\begin{lem}
	Let $\Psi:(\frac{4511}{4000},2]\times[0,1]\rightarrow \mathbb{R}$ be a function definded as in lemma 1.2.
Then $\Psi(c,x)\leq 320 $ for $\frac{4511}{4000}< c\leq2$ and $0\leq x\leq1$
\begin{proof}
	Since $\phi_1(c),\phi_1(c)+\phi_2(c)<0,\phi_1(c)+\phi_2(c)+\phi_3(c)<0,\phi_1(c)+\phi_2(c)+\phi_3(c)+\phi_4(c),\phi_5(c)<0$  for $1.12775< c<2$
	\begin{equation*}
		\begin{split}
			\Psi(c,x)\leq&320+(\phi_1(c)+\phi_2(c)+\phi_3(c)+\phi_4(c))x^4+\phi_5(c)x^4<320.
		\end{split}
	\end{equation*}
\end{proof}
\end{lem}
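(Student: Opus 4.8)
The plan is to reduce the claim to a one-variable sign condition and then dispatch it by a summation-by-parts estimate. Since $\Psi(c,x)=320+g(c,x)$ with
\[
g(c,x):=\psi_1(c)+\psi_2(c)\,x+\psi_3(c)\,x^2+\psi_4(c)\,x^3+\psi_5(c)\,x^4,
\]
the assertion $\Psi\le 320$ is equivalent to $g(c,x)\le 0$ on $\left(\frac{4511}{4000},2\right]\times[0,1]$. I introduce the partial sums
\[
S_1:=\psi_1,\quad S_2:=\psi_1+\psi_2,\quad S_3:=\psi_1+\psi_2+\psi_3,\quad S_4:=\psi_1+\psi_2+\psi_3+\psi_4,
\]
viewed as functions of $c$, and I will show that $S_1,S_2,S_3,S_4\le 0$ and $\psi_5\le 0$ on $\left(\frac{4511}{4000},2\right]$; the bound then follows for every $x\in[0,1]$.

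The rearrangement rests on the identity, valid for all $x$,
\[
\psi_1+\psi_2 x+\psi_3 x^2+\psi_4 x^3-S_4\,x^3=(1-x)\bigl(S_1+S_2\,x+S_3\,x^2\bigr),
\]
obtained by inserting $S_4=\psi_1+\psi_2+\psi_3+\psi_4$ and factoring $1-x^3$ and $1-x^2$. For $x\in[0,1]$ the factor $1-x$ is nonnegative while $S_1,S_2,S_3\le 0$, so the right-hand side is $\le 0$; hence $\psi_1+\psi_2 x+\psi_3 x^2+\psi_4 x^3\le S_4\,x^3$. Adding $\psi_5 x^4$ and using $S_4\le 0$, $\psi_5\le 0$ gives
\[
g(c,x)\le S_4\,x^3+\psi_5\,x^4\le 0,
\]
which is exactly the estimate displayed in the lemma and yields $\Psi(c,x)\le 320$.

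It remains to justify the five sign conditions on $\left(\frac{4511}{4000},2\right]$. Three are immediate from Lemma 1.2: $S_1\le 0$ is part (a), $S_2\le 0$ is part (b) (whose range $\left(\frac{87137}{250000},2\right]$ contains ours), and $\psi_5\le 0$ is part (e). The remaining two, $S_3\le 0$ and $S_4\le 0$, are not supplied verbatim, and these I would prove by the device used in parts (b)--(d): set $c=\frac{4511}{4000}t$ with $t\in\left[1,\frac{8000}{4511}\right]$, expand $S_3(c(t))$ and $S_4(c(t))$ as sextic polynomials in $t$, and exhibit each as a negative constant times factors that are positive on the range (linear factors whose roots lie outside $\left[1,\frac{8000}{4511}\right]$ together with root-free quadratic factors).

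The main obstacle is the inequality $S_4\le 0$. Part (c) of Lemma 1.2 gives $S_3\le 0$ only up to $c=\frac{4511}{4000}$, so even $S_3\le 0$ on our interval requires a fresh (if routine) factorization; but $S_4$ has no analogue at all in Lemma 1.2, and the obvious attempt to salvage it from part (d) fails, since $S_4=(S_3+0.6\,\psi_4)+0.4\,\psi_4\le 0.4\,\psi_4$ is worthless wherever $\psi_4>0$. As $\psi_4$ changes sign on $\left(\frac{4511}{4000},2\right)$---being positive just above $\frac{4511}{4000}$ and vanishing at $c=2$---no single use of (d) controls $S_4$, forcing a direct estimate. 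Since $S_4(1)>0$ while $S_4<0$ throughout $\left[\frac{4511}{4000},2\right]$, the relevant root of $S_4$ lies just to the left of $\frac{4511}{4000}$; this is why the threshold $\frac{4511}{4000}$ is the sharp left endpoint and why the factorization near $t=1$ must be arranged with care.
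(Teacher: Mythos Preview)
Your approach is essentially the same as the paper's: both reduce the inequality to the five sign conditions $S_1,S_2,S_3,S_4,\psi_5\le 0$ on $\left(\frac{4511}{4000},2\right]$ and then conclude $g(c,x)\le S_4 x^3+\psi_5 x^4\le 0$. Your Abel-summation identity $(1-x)(S_1+S_2x+S_3x^2)=\psi_1+\psi_2x+\psi_3x^2+\psi_4x^3-S_4x^3$ is exactly the rigorous justification the paper omits when it passes directly to its displayed bound.

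The two sign conditions you flag as not supplied by Lemma~1.2---namely $S_3\le 0$ and $S_4\le 0$ on $\left(\frac{4511}{4000},2\right]$---are also simply asserted in the paper's proof without derivation, so on this point you are not behind the paper but ahead of it in transparency. Your proposed verification via the substitution $c=\frac{4511}{4000}\,t$ and explicit factorization is the natural route and parallels parts (b)--(d) of Lemma~1.2; carrying it out would close the gap that the paper itself leaves open.
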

	\section{\textbf{Bound for $H_{3,1}(f^{-1})$}}
	\begin{thm}
		If $f\in$ $\mathcal{F}$, then
		\begin{equation*}
			\big|H_{3,1}(f^{-1})\big| \leq \frac{1}{16}
		\end{equation*} 
		and the inequality is sharp for $f_0=z/\sqrt{1-z^2}$.
	\end{thm}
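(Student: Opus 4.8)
The plan is to follow the standard coefficient-body approach, after first linearising the condition defining $\mathcal F$. Since $f\in\mathcal F$ means $\operatorname{Re}\bigl(1+zf''(z)/f'(z)\bigr)>-\tfrac12$, the Möbius map $w\mapsto\frac{3w-1}{2}$ --- which carries $\{\operatorname{Re}w>0\}$ onto $\{\operatorname{Re}w>-\tfrac12\}$ and fixes $1$ --- produces a $p\in\mathcal P$, $p(z)=1+\sum_{t\ge1}c_tz^t$, with
\[
\frac{zf''(z)}{f'(z)}=\frac32\bigl(p(z)-1\bigr).
\]
Comparing Taylor coefficients in this identity expresses $a_2,a_3,a_4,a_5$ as explicit polynomials in $c_1,c_2,c_3,c_4$ (for instance $a_2=\tfrac34c_1$). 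I would then feed these into the classical inversion formulas $t_2=-a_2$, $t_3=2a_2^2-a_3$, $t_4=-5a_2^3+5a_2a_3-a_4$, $t_5=14a_2^4-21a_2^2a_3+6a_2a_4+3a_3^2-a_5$, and into $H_{3,1}(f^{-1})=2t_2t_3t_4-t_3^3-t_4^2+t_3t_5-t_2^2t_5$, turning the functional into one polynomial $\mathcal H(c_1,c_2,c_3,c_4)$. Since a rotation $p(z)\mapsto p(e^{i\theta}z)$ replaces $c_t$ by $e^{it\theta}c_t$ and multiplies $H_{3,1}(f^{-1})$ by a unimodular factor, I may assume $c_1=c\in[0,2]$.

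Next I would substitute the expressions of Lemma 1.1 for $c_2,c_3,c_4$ (with $\nu=4-c^2$ and $|\mu|,|\rho|,|\psi|\le1$) into $\mathcal H$ and apply the triangle inequality. Because $\psi$ enters linearly, bounding $|\psi|\le1$ is immediate; collecting the remaining $\rho$-terms and using $|\rho|\le1$ then leaves a majorant in $c$ and $x:=|\mu|\in[0,1]$ only. After clearing the denominators accumulated along the way, this majorant is precisely $\tfrac1{5120}\Psi(c,x)$, where $\Psi$ is the polynomial of Lemma 1.3; since $320/5120=\tfrac1{16}$, the whole theorem reduces to the inequality $\Psi(c,x)\le320$ on $[0,2]\times[0,1]$.

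That inequality is exactly what Lemmas 1.2--1.8 are designed to furnish, by partitioning the rectangle into six pieces. On $c\in[0,\tfrac{87137}{250000}]$ I would handle $x\in(0,0.25)$ by freezing the cubic and quartic $x$-terms at an endpoint and maximising the remaining quadratic in $x$ (Lemma 1.3), and $x\in[0.25,1]$ by rewriting $\Psi-320$ as $\Phi(c,x)=\sum_{k=0}^{6}\phi_k(x)c^k$ and using that enough partial sums of the $\phi_k$ are nonpositive (Lemma 1.4). On $c\in(\tfrac{87137}{250000},\tfrac{4511}{4000}]$ I would split $x$ at $0.6$, using the coarse estimate of Lemma 1.5 for $x<0.6$ (which rests on parts (b)--(d) of Lemma 1.2) and the analogous $\Gamma$-reorganisation of Lemmas 1.6--1.7 for $x\ge0.6$; the range $c\in(\tfrac{4511}{4000},2]$ is then covered for all $x\in[0,1]$ by Lemma 1.8, with the leftover one-variable inequalities supplied by Lemma 1.2(a),(e). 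Assembling the six cases yields $\Psi(c,x)\le320$ throughout, hence $\big|H_{3,1}(f^{-1})\big|\le\tfrac1{16}$.

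For sharpness I would take $f_0(z)=z/\sqrt{1-z^2}$, for which $f_0'(z)=(1-z^2)^{-3/2}$ and $1+zf_0''(z)/f_0'(z)=(1+2z^2)/(1-z^2)$, whose real part exceeds $-\tfrac12$; thus $f_0\in\mathcal F$, and this is the extremal configuration $c=0$, $x=1$. From $f_0(z)=z+\tfrac12z^3+\tfrac38z^5+\cdots$ one reads off $a_2=a_4=0$, $a_3=\tfrac12$, $a_5=\tfrac38$, and since $f_0^{-1}(w)=w/\sqrt{1+w^2}=w-\tfrac12w^3+\tfrac38w^5-\cdots$ one gets $t_2=t_4=0$, $t_3=-\tfrac12$, $t_5=\tfrac38$; hence $H_{3,1}(f_0^{-1})=-t_3^3+t_3t_5=\tfrac18-\tfrac3{16}=-\tfrac1{16}$, so the bound is attained. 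The main obstacle is not conceptual but organisational and computational: one must carry out the triangle-inequality reduction so that the majorant emerges as precisely $\tfrac1{5120}\Psi(c,x)$ --- with no slack that would overshoot $\tfrac1{16}$ --- and it is exactly this tightness demand that dictates the otherwise unmotivated breakpoints $\tfrac{87137}{250000}$ and $\tfrac{4511}{4000}$ in the subsequent case analysis.
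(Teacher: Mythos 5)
Your setup (Carath\'eodory representation $zf''/f'=\tfrac32(p-1)$, the inversion formulas for $t_2,\dots,t_5$, Lemma~1.1, and the final reduction to a polynomial inequality handled by Lemmas~1.2--1.8, plus the correct sharpness computation for $f_0$) matches the paper's strategy, but there is a genuine gap at the central reduction step. You claim that after bounding $|\psi|\le1$ and then ``using $|\rho|\le1$'' the problem collapses to the two--variable inequality $\Psi(c,x)\le 320$ on $[0,2]\times[0,1]$. This is not a valid majorization. The triangle-inequality bound (the paper's $\vartheta(c,x,y)$ in (2.11)) contains the term $12\big(c^2+2(4-c^2)x\big)(1-x^2)(1-y^2)$, whose coefficient is \emph{positive} and which is \emph{decreasing} in $y=|\rho|$, while the other $\rho$-terms are increasing in $y$. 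Hence $\vartheta$ is not monotone in $y$: the net $y^2$-coefficient, proportional to $3c^2x+(4-c^2)(5+x^2)-3c^2-6(4-c^2)x$, changes sign (it is negative for instance at $c=1.9$, $x=0$), so simply evaluating at $y=1$ does not bound the maximum over $y$. If instead you bound every $\rho$-term separately by its maximum over $|\rho|\le1$, you do not get $\Psi(c,x)=\vartheta(c,x,1)$ but the larger quantity $\Psi(c,x)+12(4-c^2)\big(c^2+2(4-c^2)x\big)(1-x^2)$, which exceeds $320$ (at $c=0$, $x=\tfrac12$ it equals $436$), so that route overshoots $\tfrac1{16}$ and the proof breaks down.

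This is precisely why the paper does not reduce to two variables: it carries out a full maximization of $\vartheta$ over the cube $[0,2]\times[0,1]\times[0,1]$ --- vertices, the twelve edges, the six faces, and then the interior, where it solves $\partial\vartheta/\partial y=0$ and splits into the cases D1/D2 according to the sign of the combined $y^2$-coefficient, with Lemmas~1.2--1.8 used only to control the single face $y=1$ (i.e.\ $\vartheta(c,x,1)\le320$). To repair your argument you must either reproduce this three-variable analysis (or an equivalent treatment of the non-monotone $y$-dependence, e.g.\ a sign split as in D1/D2 showing that when the $y^2$-coefficient is nonpositive one can still bound the $y$-linear term at $y=1$ and discard the rest), rather than asserting that $|\rho|\le1$ alone yields the majorant $\tfrac1{5120}\Psi(c,x)$.
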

	\begin{proof}
		For $f\in$ $\mathcal{F}$, using the defination (1.2), we have 
		\begin{equation}
			w=f(f^{-1})=f^{-1}(w)+\sum_{n=2}^{\infty}a_n(f^{-1}(w))^n.
		\end{equation}
		Further, we have
		\begin{equation}
			w=f(f^{-1})=w+\sum_{n=2}^{\infty}t_nw^n+\sum_{n=2}^{\infty}a_n(w+\sum_{n=2}^{\infty}t_nw^n)^n.
		\end{equation}
		Upon simplification, we obtain
		\begin{align}\nonumber
			(t_2+a_2)w^2+(t_3+2a_2t_2+a_3)w^3+(t_4+2a_2t_3+a_2t_2^2+3a_3t_2+a_4)w^4\\+(t_5+2a_2t_4+2a_2t_2t_3+3a_3t_3+3a_3t_2^2+4a_4t_2+a_5)w^5+......=0.
		\end{align}
		Equating the coefficients in power of $w$ from (2.3), upon simplification, we obtain
		\begin{equation}
			\begin{split}
				&t_2=-a_2; t_3=\{-a_3+2a_2^2\}; t_4=\{-a_4+5a_2a_3-5a_2^3\};\\&t_5=\{-a_5+6a_2a_4-21a_2^2a_3+3a_3^2+14a_2^4\}.
			\end{split}
		\end{equation}
		Using the values of $a_n (n=2,3,4,5)$ from (2.2) in (2.4), upon simplification, we obtain
		\begin{equation}
			\begin{split}
				&t_2=-\frac{3c_1}{4},~t_3=\frac{1}{4}\left(3c_1^2-c_2\right),~t_4=-\frac{1}{32}\left(27c_1^3-21c_1c_2+4c_3\right)\\\text{and}~
				&t_5=-\frac{3}{160}\left(4c_4-22c_1c_3+69c_1^2c_2-7c_2^2-54c_1^4\right).
			\end{split}
		\end{equation}
		Now,
		\begin{align}
			H_{3,1}(f^{-1})&= \begin{array}{|ccc|}
				t_{1}=1 & t_{2} & t_{3}\\
				t_{2} & t_{3} & t_{4}\\
				t_{3} & t_{4} & t_{5}
			\end{array}~,
		\end{align}
		Using the values of $t_j,~ (j=2,3,4,5)$ from (2.5) in (2.6), it simplifies to give
		\begin{equation}
			\begin{split}
				H_{3,1}(f^{-1})=\frac{1}{5120}\Big(& 27c_1^6-108c_1^4c_2+36c_1^3c_3+117c_1^2c_2^2-88c_2^3\\&+72c_1c_2c_3-72c_1^2c_4-80c_3^2+96c_2c_4\Big).
			\end{split}
		\end{equation}
		In view of Lemma 1.1, adopting the similar procedure as Theorem 2.2 in (2.7), we obtain
		\begin{equation}
			\begin{split}
				H_{3,1}(f^{-1})=\frac{1}{5120}&\bigg[\frac{5 c_1^6}{4} + \nu \bigg\{-\frac{13 c_1^4 \mu}{2} - 
				2 c_1^4 \mu^2 + \left(37 c_1^2 - \frac{37 c_1^4}{4}\right) \mu^2+  c_1^2\nu \mu^4 \\&- \left(\frac{236}{7} + 
				7 \left(-\frac{18}{7} + c_1^2\right)^2\right) \mu^3  + (1 - 
				\mu^2) \Big(2 c_1 \nu (1 - 2 \mu) \mu + 4 c_1^3 (1 + 3 \mu)\Big) \rho\\&+ 
				4 (1 - \mu^2) \Big(3 c_1^2 \mu - \nu (5 + \mu^2)\Big) \rho^2 + 
				12 \Big(-c_1^2 + 2 \nu \mu\Big) (1 - \mu^2) (1 - \rho^2)\bigg\}\bigg].
			\end{split}
		\end{equation}
		Putting $c:=c_1$ and using $\nu=(4-c^2)$ in (2.8), a simple calculation gives
		\begin{equation}
			\begin{split}
				H_{3,1}(f^{-1})=\frac{1}{5120}&\bigg[\frac{5 c^6}{4} + (4 - c^2) \bigg\{-\frac{13 c^4 \mu}{2} - 
				2 c^4 \mu^2 + \left(37 c^2 - \frac{37 c^4}{4}\right) \mu^2+ (4 c^2 - c^4) \mu^4 \\&- \left(\frac{236}{7} + 
				7 \left(-\frac{18}{7} + c^2\right)^2\right) \mu^3  + (1 - 
				\mu^2) \Big(2 c (4 - c^2) (1 - 2 \mu) \mu + 4 c^3 (1 + 3 \mu)\Big) \rho\\&+ 
				4 (1 - \mu^2) \Big(3 c^2 \mu - (4 - c^2) (5 + \mu^2)\Big) \rho^2 + \\&
				12 \Big(-c^2 + 2 (4 - c^2) \mu\Big) (1 - \mu^2) (1 - \rho^2)\bigg\}\bigg].
			\end{split}
		\end{equation}
		Taking modulus on both sides of (2.9), using $|\mu|=x\in[0,1]$, $|\rho|=y\in[0,1]$, $c_1=c\in[0,2]$ and $|\psi|\leq 1$, we obtain
		\begin{equation}
			\bigg|H_{3,1}(f^{-1})\bigg|\leq\frac{1}{5120}\vartheta  \left(c,x,y\right),
		\end{equation}
		where $\vartheta:\mathbb{R}^3\rightarrow\mathbb{R}$ is defined as
		\begin{equation}
			\begin{split}
				\vartheta  \left(c,x,y\right)=&\bigg[\frac{5 c^6}{4} + (4 - c^2) \bigg\{\frac{13 c^4 x}{2} +
				2 c^4 x^2 + \left(37 c^2 - \frac{37 c^4}{4}\right) x^2+ (4 c^2 - c^4) x^4 \\&+ \left(\frac{236}{7} + 
				7 \left(-\frac{18}{7} + c^2\right)^2\right) x^3  + (1 - 
				x^2) \Big(2 c (4 - c^2) (1 + 2 x) x + 4 c^3 (1 + 3 x)\Big) w\\&+ 
				4 (1 - x^2) \Big(3 c^2 x + (4 - c^2) (5 + x^2)\Big) y^2 \\&+ 
				12 \Big(c^2 + 2 (4 - c^2) x\Big) (1 - x^2) (1 - y^2)\bigg\}\bigg].
			\end{split}
		\end{equation}
		To achive our result, it is sufficient to maximize the function $\vartheta\left(c,x,y\right)$ on \\$\Omega:=[0,2]\times[0,1]\times[0,1]$.\\
		{\bf A.} On the eight vertices of $\Omega$, from (2.11), we have
		\begin{equation*}
			\begin{split}
				&\vartheta  \left(0,0,0\right)=\vartheta  \left(2,0,0\right)=\vartheta  \left(2,1,0\right)=\vartheta  \left(2,1,1\right)=\vartheta  \left(2,0,0\right)=0,\\&\vartheta  \left(0,0,1\right)=320,~\vartheta  \left(0,1,0\right)=\vartheta  \left(0,1,1\right)=320.
			\end{split}
		\end{equation*}
		{\bf B.} Now, we consider each of the tweleve edges of $\Omega$ in view of (2.11) 
		
		as fallows 
		
		(i) For the edge $c=0,~x=0,~0<y<1,$ we obtain.
		\begin{equation*}
			\vartheta \left(0,0,y\right)=320 y^2\leq320.
		\end{equation*}
		%
		
		(ii) For the edge $c=0,~x=1,~0<y<1$, we obtain
		\begin{equation*}
			\vartheta \left(0,1,y\right)=320.
		\end{equation*}
		
		(iii) For $c=0,~y=0,~0<x<1$,
		\begin{equation*}
			\vartheta \left(0,x,0\right)=384 x- 64 x^3\leq320.
		\end{equation*}
		%
		
		(iv) For $c=0,~y=1,~0<x<1$,
		\begin{equation*}
			\begin{split}
				\vartheta \left(0,x,1\right)&=320 - 256 x^2 + 320 x^3 - 64 x^4\\&=320 - 64 (4 -x) (1 -x) x^2\leq320.
			\end{split}
		\end{equation*}
		
		%
		%
		
		(v) If $x=0,~y=0,~0<c<2$ in (2.11), then
		\begin{equation*}
			\vartheta \left(c,0,0\right)=48 c^2 - 12 c^4 + \frac{5 c^6}{4}\leq80.
		\end{equation*}
		
		
		(vi) For $x=0,~y=1,0<c<2$,
		\begin{equation*}
			\vartheta \left(c,0,1\right)=320 - 160 c^2 + 16 c^3 + 20 c^4 - 4 c^5 + \frac{5 c^6}{4}\leq320.
		\end{equation*}
		
		(vii) For the edges: $x=1,~y=0,0<c<2~\text{or}~x=1,~y=1,0<c<2$, we have
		\begin{equation*}
			\vartheta \left(c,1,y\right)=320 - 60 c^2 + 16 c^4 - 4 c^6\leq320.
		\end{equation*}
		%
		
		(viii) For the edges:  $c=2,~x=0,~0<y<1$ or $c=2,~x=1,~0<y<1$ or
		
		$c=2,~y=0,~0<x<1$ or $c=2,~y=1,~0<x<1$, we obtain
		\begin{equation*}
			\vartheta \left(2,x,y\right)=0.
		\end{equation*}
		{\bf C.} Further, we us consider the six faces of $\Omega$ with respect to $\vartheta  \left(c,x,y\right)$, from (2.11)
		
		(i) If $c=2$, in (2.11), then $$\vartheta  \left(2,x,y\right)=0.$$
		
		(ii) On the face $c=0$, from (2.11), we obtain
		\begin{equation*}
			\begin{split}
				\vartheta  \left(0,x,y\right)&=384 x- 64 x^3 + (320 - 384 x- 256 x^2 + 384 x^3 - 64 x^4) y^2\\&=384 x- 64 x^3 + 64 (5 -x) (1 -x)^2 (1 +x) y^2\\&\leq384 x- 64 x^3 + 64 (5 -x) (1 -x)^2 (1 +x)\\&=320 - 256 x^2 + 320 x^3 - 64 x^4\leq320
			\end{split}
		\end{equation*}
		%
		%
		
		(iii) On the face $x=0,~c\in(0,2),~y\in(0,1)$, from (2.11), we obtain
		\begin{equation*}
			\begin{split}
				\vartheta\left(c,0,y\right)&=\frac{5 c^6}{4} + (4 - c^2) (4 c^3 y+ 20 (4 - c^2) y^2 + 12 c^2 (1 - y^2))\\&=\frac{5 c^6}{4} + (4 - c^2) (4 c^3 y+ 80 y^2 + c^2 (-20 y^2 + 12 (1 - y^2)))\\&\leq\frac{5 c^6}{4} + (4 - c^2) (4 c^3  + 80  + 12c^2)\leq320,~c\in(0,2).
			\end{split}
		\end{equation*}
		%
		
		(vi) On the face $x=1,~c\in(0,2),~y\in(0,1)$ in (2.11), we observe that the 
		
		function $\vartheta  \left(c,1,y\right)$ is independent of $y$, from B(vii), we have
		\begin{equation*}
			\vartheta  \left(c,1,y\right)\leq320.
		\end{equation*}
		
		(v) For $y=0,~c\in(0,2),~x\in(0,1)$ in (2.11), we get
		
		\begin{equation*}
			\begin{split}
				\vartheta\left(c,x,0\right)=\frac{5 c^6}{4} + &(4 - c^2) (\frac{13 c^4 x}{2} + 
				2 c^4 x^2 + (37 c^2 - \frac{37 c^4}{4}) x^2 + (4 c^2 - c^4) x^4\\& + (\frac{236}{7} + 
				7 (-(\frac{18}{7}) + c^2)^2) x^3 + 
				12 (c^2 + 2 (4 - c^2) x) (1 - x^2))\\=\frac{5 c^6}{4} + &(4 - c^2) (96 x- 16 x^3 + c^4 (\frac{13 x}{2} - \frac{29 x^2}{4} + 7 x^3 - x^4) \\&+ 
				c^2 (12 - 24 x + 25 x^2 - 12 x^3 + 4 x^4))\\\leq\frac{5 c^6}{4} + &(4 - c^2)(80 + \frac{21}{4}c^4  + 12 c^2 )
				\leq320,~c\in(0,2),
			\end{split}
		\end{equation*}
			
			(vi) For $y=1$,
			\begin{align*}
				\vartheta  \left(c,x,1\right)=&320 - 160 c^2 + 16 c^3 + 20 c^4 - 4 c^5+\frac{5 c^6}{4}\\&+\left(32 c + 48 c^2 + 32 c^3 + 14 c^4 - 10 c^5-\frac{13 c^6}{2}\right)x\\&+\left(-256 + 64 c + 276 c^2 - 48 c^3 - 82 c^4 + 8 c^5+\frac{29 c^6}{4}\right) x^2\\&+\left(320 - 32 c - 272 c^2 - 32 c^3 + 76 c^4 + 10 c^5 - 7 c^6\right) x^3\\&+\left(-64 - 64 c + 48 c^2 + 32 c^3 - 12 c^4 - 4 c^5 + c^6\right) x^4\\&\text{Using lemma 1.2,1.3, 1.4, 1.5, 1.6, 1.7, 1.8 we can see that} \\\leq&320,~\text{with}~c\in(0,2)~\text{and}~x\in(0,1).
			\end{align*}

			{\bf D.} Now we consider the interior portion of $\Omega$, i.e. $(0,2)\times(0,1)\times(0,1).$
			
			Differentiating $\vartheta (c,x,y)$ given in (2.11) partially with respect $y$, we obtain
			\begin{align*}
				\frac{\partial \vartheta }{\partial y}=(4 - c^2)& \Big((1 - x^2) (2 c (4 - c^2) x (1 + 2 x) + 4 c^3 (1 + 3 x)) \\&- 
				24 (c^2 + 2 (4 - c^2) x) (1 - x^2) y+ 
				8 (1 - x^2) (3 c^2 x + (4 - c^2) (5 + x^2)) y\Big).
			\end{align*}
			
			Upon solving $\frac{\partial \vartheta }{\partial y}=0$, we get
			\begin{equation*}
				y_1=\frac{4 c x (1 + 2 x) + c^3 (2 + (5 - 2 x) x)}{4 (c^2 (-8 +x) - 4 (-5 +x)) (-1 +x)},
			\end{equation*}
			
			for $y_1>0,$ iff the following condition
			
			\begin{equation}
				c^2 (-8 +x) - 4 (-5 +x)<0
			\end{equation}
			
			must hold. From (2.12), we have
			\begin{equation}
				\left(\frac{4}{\sqrt{7}}<c\leq \sqrt{\frac{5}{2}}\land \frac{8 c^2-20}{c^2-4}<x<1\right)\lor \left(\sqrt{\frac{5}{2}}<c<2\land 0<x<1\right).
			\end{equation}
			Hence, $y_1<0,$ for $\left(0<c\leq \frac{4}{\sqrt{7}}\land 0<x<1\right)\lor \left(\frac{4}{\sqrt{7}}<c<\sqrt{\frac{5}{2}}\land 0<x<\frac{8 c^2-20}{c^2-4}\right).$
			Therefore,$\vartheta (c,x,y)$ has no critical point in the interior of $(0,\frac{4}{\sqrt{7}}]\times(0,1)\times(0,1)$.\\
			Now, it remains to maximize $\vartheta (c,x,y)$ in $(\frac{4}{\sqrt{7}},2)\times(0,1)\times(0,1).$ 
			
			We can rewrite $\vartheta (c,x,y)$ as
			\begin{equation*}
				\begin{split}
					\vartheta (c,x,y)=&\frac{5 c^6}{4}+\left(4-c^2\right) \Bigg[2 c^4 x^2+\frac{13 c^4 x}{2}+\left(7 \left(c^2-\frac{18}{7}\right)^2+\frac{236}{7}\right) x^3\\&+\left(4 c^2-c^4\right) x^4+\left(37 c^2-\frac{37 c^4}{4}\right) x^2\\&+\left(1-x^2\right)\left(4 c^3 (3 x+1)+2 \left(4-c^2\right) cx (2 x+1)\right)y\\&+ \left(1-x^2\right)  \left(4\left(4-c^2\right) \left(x^2+5\right)+12 c^2 x-12\left(2 \left(4-c^2\right) x+c^2\right)\right)y^2\\&+12 \left(1-x^2\right) \left(2 \left(4-c^2\right) x+c^2\right)\Bigg].
				\end{split}
			\end{equation*}
			\\ 
			\textbf{ Case D1} Suppose, $\left(4\left(4-c^2\right) \left(x^2+5\right)+12 c^2 x-12\left(2 \left(4-c^2\right) x+c^2\right)\right)>0.$ 
			
			Then, $	\vartheta (c,x,y)\leq\vartheta (c,x,1)<320.$
			\\
			\textbf{ Case D2} Suppose, $\left(4\left(4-c^2\right) \left(x^2+5\right)+12 c^2 x-12\left(2 \left(4-c^2\right) x+c^2\right)\right)\leq0.$	
			
			Then,
			\begin{equation*}
				\begin{split}
					\vartheta (c,x,y)\leq&\frac{5 c^6}{4}+\left(4-c^2\right) \Bigg[2 c^4 x^2+\frac{13 c^4 x}{2}+\left(7 \left(c^2-\frac{18}{7}\right)^2+\frac{236}{7}\right) x^3\\&+\left(4 c^2-c^4\right) x^4+\left(37 c^2-\frac{37 c^4}{4}\right) x^2\\&+\left(1-x^2\right)\left(4 c^3 (3 x+1)+2 \left(4-c^2\right) c x(2 x+1)\right)y\\&+12 \left(1-x^2\right) \left(2 \left(4-c^2\right) x+c^2\right)\Bigg]
				\end{split}
			\end{equation*}
			\begin{equation*}
				\begin{split}
					\leq&\frac{5 c^6}{4}+\left(4-c^2\right) \Bigg[2 c^4 x^2+\frac{13 c^4 x}{2}+\left(7 \left(c^2-\frac{18}{7}\right)^2+\frac{236}{7}\right) x^3\\&+\left(4 c^2-c^4\right) x^4+\left(37 c^2-\frac{37 c^4}{4}\right) x^2\\&+\left(1-x^2\right)\left(4 c^3 (3 x+1)+2 \left(4-c^2\right) c x (2 x+1)\right)\\&+12 \left(1-x^2\right) \left(2 \left(4-c^2\right) x+c^2\right)\Bigg]
					\\=&\frac{5 c^6}{4}-4 c^5-12 c^4+16 c^3+48 c^2\\&+\left(c^6-4 c^5-8 c^4+32 c^3+16 c^2-64 c\right) x^4\\&+\left(-7 c^6+10 c^5+40 c^4-32 c^3-32 c^2-32 c-64\right) x^3\\&+\left(\frac{29 c^6}{4}+8 c^5-54 c^4-48 c^3+100 c^2+64 c\right) x^2\\&+\left(-\frac{13 c^6}{2}-10 c^5+50 c^4+32 c^3-192 c^2+32 c+384\right)v\\\leq&\frac{5 c^6}{4}-4 c^5-12 c^4+16 c^3+48 c^2\\&+\left(c^6-4 c^5-8 c^4+32 c^3+16 c^2-64 c\right) x^4\\&+\left(-7 c^6+10 c^5+40 c^4-32 c^3-32 c^2-32 c-64\right) x^3\\&+\left(\frac{29 c^6}{4}+8 c^5-54 c^4-48 c^3+100 c^2+64 c\right) x^2\\&+\left(-\frac{13 c^6}{2}-10 c^5+50 c^4+32 c^3-192 c^2+32 c+384\right)\\=&384+32 c-\frac{21 c^6}{4}-14 c^5+38 c^4+48 c^3-144 c^2\\&+\left(c^6-4 c^5-8 c^4+32 c^3+16 c^2-64 c\right) x^4\\&+\left(-7 c^6+10 c^5+40 c^4-32 c^3-32 c^2-32 c-64\right) x^3\\&+\left(\frac{29 c^6}{4}+8 c^5-54 c^4-48 c^3+100 c^2+64 c\right) x^2:=h(c,x). 
				\end{split}
			\end{equation*}
			
			For $c\in\left(\frac{4}{\sqrt{7}},\sqrt{\frac{5}{2}}\right]$ and $x\in(0,1)$
			\begin{equation*}
				h(c,x)\leq295+28x^2-81x^3-8x^4<296.
			\end{equation*}
			
			For $c\in\left(\sqrt{\frac{5}{2}},2\right)$ and $x\in(0,1)$
			\begin{equation*}
				h(c,x)\leq282+17x^2-0x^3+1x^4<300.
			\end{equation*}
			
			Hence,from \textbf{Case D1 } and \textbf{Case D2 }$\vartheta (c,x,y)<320.$
			\\
			In review of cases \textbf{A}, \textbf{B}, \textbf{C} and \textbf{D}, we obtain
			\begin{equation}
				\max\bigg\{\vartheta  (c,x,y)\leq320:c\in[0,2],x\in[0,1]~\text{and}~y\in[0,1]\bigg\}.
			\end{equation}
			Symplifying the expressions (2.10) and (2.14), we get
			\begin{equation}
				\Big|H_{3,1}(f^{-1})\Big|\leq\frac{1}{16}.
			\end{equation}
			For $f_0\in\mathcal{F}$, we obtain $a_2=a_4=0,~	a_3=1/2~\text{and}~a_5=3/8$, further we have, $t_2=t_4=0,~	t_3=-1/2~\text{and}~t_5=3/8$,  which follows the result.
		\end{proof}
		\textbf{Data Availability:}~My manuscript has no associate data

	\end{document}